\documentclass[11pt]{amsart}

\usepackage{microtype}

\tolerance=1000

\usepackage[T1]{fontenc}

\usepackage{RNdefn}
\setcounter{tocdepth}{1}
\usepackage{eucal, mathtools}
\usepackage{tikz}
\usepackage{tikz-cd}
\usepackage{aliascnt}

\usepackage[nameinlink]{cleveref}
\crefname{subsection}{\S\!\!}{subsections}
\crefname{section}{\S\!\!}{\S\!}
\renewcommand{\autoref}{\Cref}

\newcommand{\St}{\mathbf{St}}

\newcommand{\VB}{\mathrm{VB}}


\newcommand{\m}{\to}



\newaliascnt{thmauto}{thmcounter}

\newaliascnt{Defauto}{thmcounter}

\newaliascnt{exauto}{thmcounter}

\newaliascnt{exsauto}{thmcounter}

\newaliascnt{lemauto}{thmcounter}

\newaliascnt{propauto}{thmcounter}

\newaliascnt{corauto}{thmcounter}

\newaliascnt{conjauto}{thmcounter}

\newaliascnt{queauto}{thmcounter}

\newaliascnt{remauto}{thmcounter}

\usepackage[T1]{fontenc}
\usepackage{lmodern}

\theoremstyle{plain}
\newtheorem{theorem}[thmauto]{Theorem}

\newtheorem{lemma}[lemauto]{Lemma}
\newtheorem{proposition}[propauto]{Proposition}

\newtheorem{remark}[remauto]{Remark}

\newtheorem*{rem*}{Remark}
\newtheorem*{thm*}{Theorem}
\newtheorem*{exs*}{Examples}



\newtheorem*{definition*}{Definition}

	\title[Corrigendum]{Corrigendum to ``Stability in the high-dimensional cohomology of congruence subgroups'' [Compos. Math. 156 (2020), no. 4, 822--861]}

\author{Jeremy Miller}
\address{150 N University Street
	West Lafayette, IN-47904
	USA, Department of Mathematics, Purdue University, USA}
\email{jeremykmiller@purdue.edu}
\author{Rohit Nagpal}
\author{Peter Patzt}
\address{601 Elm Av,
	Norman, OK-73019
	USA, Department of Mathematics, University of Oklahoma, USA}
\email{ppatzt@ou.edu}
\date{\today}



\begin{document}
	
	
\maketitle

After the publication of \cite{MNP}, Andrew Putman pointed out a mistake in our paper and helped us fix it. In this note, we will explain what this mistake is and how to fix it. We will assume familiarity with the paper here.

The problem is in \cite[Section 3.2]{MNP}, where we prove \cite[Theorem 3.2]{MNP}, that the Steinberg monoid is Koszul. In particular, the partial order defined on page 833 of \cite{MNP} does not satisfy (P2) as we claim without proof. We will correct the proof by giving a partial order that satisfies all claimed properties and proving them. For completeness sake, we give a complete proof of \cite[Proposition 3.6]{MNP}, which states the following.

\begin{proposition} \label{prop3.6}
\[ H_s(\overline{\mathcal B}^n_*( \St)) = 0 \quad\text{if $s\neq n$.}\]
\end{proposition}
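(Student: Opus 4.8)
\emph{Proof strategy.}
The vanishing $H_s(\overline{\mathcal B}^n_*(\St)) = 0$ for $s \neq n$ is exactly the statement that the quadratic monoid $\St$ is Koszul, because $\overline{\mathcal B}^n_*(\St)$ is the weight-$n$ summand of the reduced bar complex and so computes $\mathrm{Tor}^{\St}_*(\triv,\triv)$ in weight $n$. The plan is therefore to establish Koszulness by producing a $\PBW$ basis of $\St$. First I would make the complex explicit: $\overline{\mathcal B}^n_s(\St)$ is the direct sum, over ordered decompositions $V = V_1 \oplus \cdots \oplus V_s$ of the rank-$n$ module into nonzero summands, of $\St(V_1) \otimes \cdots \otimes \St(V_s)$, with differential the alternating sum of the maps merging two adjacent summands through the multiplication $\St(V_i) \otimes \St(V_{i+1}) \to \St(V_i \oplus V_{i+1})$. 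The complex is supported in degrees $0 \le s \le n$, with $\overline{\mathcal B}^n_0(\St) = 0$ for $n \ge 1$, so the content is the vanishing for $1 \le s \le n-1$; for $n \le 2$ this is trivial, respectively the Solomon--Tits theorem, which appears as the surjectivity of $\overline{\mathcal B}^2_2(\St) \to \overline{\mathcal B}^2_1(\St) = \St(V)$ (the Steinberg module is spanned by apartment classes).

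Next I would define the new partial order $\preceq$. It is placed on the monomial basis of the free monoid on the weight-$1$ part of $\St$ — equivalently, on the ordered frames $(L_1,\dots,L_n)$ of $V$, that is, the ordered decompositions into rank-$1$ summands — and is built from a choice, for each rank-$2$ summand $W$, of a total order on the rank-$1$ summands of $W$, arranged so that $\preceq$ is \emph{local}: the leading term of a quadratic relation supported in $V_i \oplus V_{i+1}$ is recorded by the pair $(L_i,L_{i+1})$ alone, so that a frame fails to be $\preceq$-reduced precisely when some consecutive pair is a leading pair. I would then verify the required properties. Roughly: (P1) asserts that $\preceq$ is well-founded and compatible with the quadratic relations, so that each relation has a unique $\preceq$-maximal term and the differential of $\overline{\mathcal B}^n_*(\St)$ becomes triangular for the induced filtration; (P2) — the property whose proof was the gap in \cite{MNP} — is the confluence (diamond-lemma) condition, that every overlap ambiguity, that is every frame $(L_1,L_2,L_3)$ of a rank-$3$ module with $(L_1,L_2)$ and $(L_2,L_3)$ both leading pairs, reduces to a common normal form, equivalently that the quadratic relations already form a Gr\"obner basis; and (P3) records that the $\preceq$-reduced frames descend to a basis of $\St$ and that this basis is quadratic, i.e.\ closed under passing to consecutive sub-frames.

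Granting (P1)--(P3), the diamond lemma shows that the $\preceq$-reduced frames form a $\PBW$ basis of $\St$; since $\St$ is quadratic, the standard criterion that a quadratic monoid with a $\PBW$ basis is Koszul (Priddy), in the twisted/species form adapted to the symmetric monoidal category in which $\overline{\mathcal B}$ is formed, then yields that $\St$ is Koszul, which is exactly the proposition. Equivalently, one argues by hand: by (P1) the associated filtration of $\overline{\mathcal B}^n_*(\St)$ has associated graded the reduced bar complex of the monomial monoid $\St_{\mathrm{mon}}$ obtained by replacing each relation with its leading monomial; this bar complex decomposes combinatorially and has homology concentrated in top homological degree (monomial quadratic monoids are Koszul), so the spectral sequence of the filtration collapses onto the diagonal.

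The step I expect to be the main obstacle — and the heart of the correction, since this is precisely where the order of \cite{MNP} fails — is the verification of (P2). The total orders on the rank-$1$ summands inside the various rank-$2$ summands must be chosen coherently enough that no three-term overlap $(L_1,L_2,L_3)$ produces an irreducible critical pair, and checking this comes down to an explicit analysis of configurations of three rank-$1$ summands in a rank-$3$ module, together with the induced orders on the three rank-$2$ summands they span, carried out through the apartment-class description of the relations among frames. A secondary but genuine task is the bookkeeping that makes $\preceq$ truly local and compatible with restriction to summands, together with fixing the signs in the differential, so that (P1) and (P3) hold as stated.
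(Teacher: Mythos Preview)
Your plan follows the architecture of the original \cite{MNP} argument: put a PBW-type partial order on ordered frames, verify (P1)--(P3), and invoke the Priddy criterion. But you leave the decisive step blank. The order is described only as ``built from a choice, for each rank-$2$ summand $W$, of a total order on the rank-$1$ summands of $W$,'' without saying which choice, and the verification of (P2) is deferred to ``an explicit analysis of configurations of three rank-$1$ summands in a rank-$3$ module'' that you do not perform. Since the error being corrected was precisely a failure of (P2) for a plausible-looking order, a proposal that neither names the order nor checks the rank-$3$ overlaps has not addressed the problem; it has only relocated it.

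The corrigendum does \emph{not} repair this route. Despite the introductory sentence about ``giving a partial order,'' what it actually does is abandon the frame-level PBW machinery and the diamond lemma entirely. It assigns to every ordered decomposition $D=(W_1,\dots,W_p)$ a coarse numerical invariant $I_D=(c_1,\dots,c_n,k)$, where the $c_i$ are the multiplicities in the concatenated sequence $S_D$ of first-nonzero-entry positions and $k$ is its inversion count, and filters $\overline{\mathcal B}^n_*(\St)$ by $I_D$ in the lexicographic order. That this is a filtration by subcomplexes is a short two-case comparison of $S_{W_i}S_{W_{i+1}}$ with $S_{W_i\oplus W_{i+1}}$. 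Acyclicity of the associated graded below degree $n$ is then proved not by identifying it with the bar complex of a monomial monoid, but by an explicit chain contraction $\Phi$: on a PBW basis element $a_1\otimes\cdots\otimes a_p$ one locates, via $j_{S_D}$, a distinguished tensor factor $a_{i_0}$ and splits off its first vector, then checks $\Phi\partial+\partial\Phi=\id$ on $F_I/F_{I'}$ by a direct (two-case) computation. No confluence, no rank-$3$ overlap analysis.

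So your strategy, if it could be completed, would be more structural and would exhibit $\St$ as a genuine PBW monoid; the paper's strategy is more hands-on but is actually complete, trading the delicate combinatorics of (P2) for a routine verification of a homotopy identity. Your ``argue by hand'' alternative is closer in spirit, but it still presupposes a frame-level order whose associated graded is the bar complex of a monomial monoid, whereas the corrigendum's filtration is coarser and its associated graded is analyzed directly.
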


We begin by recalling some essential definitions and properties from the paper. For a description of the elements of  Steinberg modules, we recall \cite[Theorem 2.4]{MNP} which states:

\begin{theorem}
Let $K$ be a field and $X$ an $n$-dimensional $K$-vector space. As a $\bk$-module, $\St(V)$ is generated by elements---called apartment classes---of the form $[v_1, \ldots, v_n]$, one for each ordered basis $v_1, \ldots, v_n$ of $V$, subject to the following relations: \begin{enumerate}
	\item $[v_1, \ldots, v_n] =\sgn(\sigma)[v_{\sigma(1)}, \ldots, v_{\sigma(n)}] $ for $\sigma$ a permutation.
	\item $[r v_1, v_2, \ldots, v_n] = [v_1, \ldots, v_n]$ for $r \in K^{\times}$.
	\item $\sum_i (-1)^i [v_0, v_1, v_2, \ldots, \hat v_i, \ldots, v_n] = 0$ where $v_0,\ldots, v_n$ are nonzero vectors and terms of the form $[v_0, v_1, v_2, \ldots, \hat v_i, \ldots, v_n] $ with $v_0, v_1, v_2, \ldots, \hat v_i, \ldots, v_n$ not a basis are omitted from the sum. 
\end{enumerate}

\noindent The $\GL(V)$-action is given by the formula $ g [v_1, \ldots, v_n]=[g v_1, \ldots, g v_n]$.
\end{theorem}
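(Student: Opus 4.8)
The plan is to read \Cref{prop3.6} as the assertion that the Steinberg monoid is Koszul and to prove it by making the bar differential triangular for a suitable partial order. Unwinding the Day convolution, $\overline{\mathcal B}^n_s(\St)$ is the direct sum, over ordered internal direct sum decompositions $\bk^n=W_1\oplus\dots\oplus W_s$ with every $W_i\neq 0$, of $\St(W_1)\otimes\dots\otimes\St(W_s)$, and the reduced bar differential is the alternating sum of the maps merging two adjacent blocks via the exterior product $\St(W_i)\otimes\St(W_{i+1})\to\St(W_i\oplus W_{i+1})$. There is no such decomposition when $s>n$, and none when $s=0$ unless $n=0$; so for $n\ge 1$ this complex is supported in degrees $1\le s\le n$, and the entire content of \Cref{prop3.6} is exactness in the range $1\le s\le n-1$.

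\emph{A spanning set.} By \cite[Theorem 2.4]{MNP}, each $\St(W_i)$ is spanned by apartment classes, so $\overline{\mathcal B}^n_s(\St)$ is spanned by ``barred bases'': an ordered basis $(v_1,\dots,v_n)$ of $\bk^n$ together with a choice of $s-1$ bars partitioning it into $s$ consecutive nonempty blocks $W_1,\dots,W_s$, taken modulo permuting vectors within a single block (with sign), rescaling individual $v_j$, and the three-term relations~(c) of \cite[Theorem 2.4]{MNP} applied inside one block. A barred basis of weight $n$ in bar-degree $n$ is exactly an ordered decomposition of $\bk^n$ into lines; these are the ``top'' generators. The bar differential deletes a bar and multiplies the two adjacent apartment classes.

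\emph{The filtration.} One now chooses a partial order $\preceq$ on barred bases, refining bar-degree, so that: (i) relations (a), (b), (c) each rewrite a barred basis in terms of strictly $\preceq$-smaller ones, giving canonical ``standard'' representatives; (ii) for a standard generator the bar differential equals $\pm(\text{a standard generator})$ plus $\preceq$-smaller terms, and matching each non-standard generator with the leading term of its differential yields an acyclic matching; (iii) the generators left unmatched in weight $n$ occur only in bar-degree $n$. Properties (i)--(iii) are the conditions demanded of the partial order in \cite[Section~3.2]{MNP}, property (ii) in particular encoding the condition (P2) referred to above. Granting (i)--(iii), the spectral sequence of the filtration by $\preceq$ (equivalently, discrete Morse theory) has $E_1$-page the free module on the unmatched generators, which in weight $n$ sits entirely in homological degree $n$; hence $\rH_s(\overline{\mathcal B}^n_*(\St))=0$ for $s\neq n$.

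\emph{The main obstacle.} Everything reduces to producing an order $\preceq$ for which (i)--(iii) hold, and the delicate point is exactly (P2): that the leading terms of the differentials of the non-standard generators are mutually compatible, with no overlap ambiguities. This is where the order on page~833 of \cite{MNP} broke down, and repairing it means (a) designing the order --- for instance by fixing a linear order on the lines of $\bk^n$ and inducing one on barred bases, $\GL_n$-equivariance being unnecessary since $\rH_*$ can be computed non-equivariantly --- and (b) carrying out the finite case analysis of how two applications of relation~(c), or of (c) and (a), interact, especially when the relevant vectors straddle a bar. (An alternative, more geometric implementation of the same idea: use the Solomon--Tits theorem to replace each $\St(W_i)$ by the augmented simplicial chain complex of the Tits building $\mathcal T(W_i)$; this presents $\overline{\mathcal B}^n_*(\St)$ as one edge of a double complex whose total complex is, up to a shift, the reduced chains of a ``split Tits building,'' and \Cref{prop3.6} becomes the Cohen--Macaulayness of that complex, which one proves by induction on $n$ via Quillen's poset fiber lemma, the links of simplices being joins of split buildings of smaller rank.) Either way, this overlap/link analysis --- the content of (P2) --- is the step that must be handled with care.
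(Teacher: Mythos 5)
Your proposal does not prove the statement in question. The statement is the presentation of $\St(V)$ by apartment classes subject to relations (a)--(c), i.e.\ \cite[Theorem 2.4]{MNP}, which this note merely recalls from the original paper (it is the Bykovskii-type presentation of the Steinberg module, resting on the Solomon--Tits theorem). A proof of it would have to identify $\St(V)$ with the top reduced homology of the Tits building, show that the apartment classes generate it, and show that relations (a)--(c) account for \emph{all} relations among them; nothing in your write-up addresses any of this. What you have written instead is a strategy for \Cref{prop3.6}, the vanishing of $H_s(\overline{\mathcal B}^n_*(\St))$ for $s\neq n$, which is a different result.

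Even judged as a proof of \Cref{prop3.6}, there is a genuine gap: you reduce everything to the existence of a partial order satisfying your conditions (i)--(iii) and then explicitly decline to construct one or to carry out the overlap analysis for (P2). That construction and verification are the entire mathematical content here --- it is precisely the point where the original argument in \cite{MNP} failed. The corrigendum supplies it concretely: a filtration indexed by the invariant $I_D = (c_1,\dots,c_n,k)$ built from the PBW-indices of the summands of a decomposition, a lemma showing that the bar differential respects this filtration, and an explicit chain homotopy $\Phi$ (splitting off the first vector of the block containing the minimal ascent of $S_D$) satisfying $\Phi\partial + \partial\Phi = \mathrm{id}$ on the filtration quotients in degrees $p<n$. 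Without an analogous construction your outline is a plan, not a proof, of either statement.
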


We need the product structure from \cite[Proposition 2.6]{MNP}:

\begin{proposition} Let $K$ be a field and let $V$ and $U$ be $K$-vector spaces of dimension $n$ and $m$ respectively.
The map $\St(V) \otimes_{\bk} \St(U)  \m \St(V \oplus U)$ given by \[ [v_1, \ldots, v_n] \otimes [u_1, \ldots, u_m]  \mapsto  [v_1, \ldots, v_n, u_1, \ldots, u_m] \] is well-defined and gives $\St$ the structure of a monoid object in $(\Mod_{\VB},\otimes)$.
\end{proposition}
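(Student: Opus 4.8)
The plan is to prove the two assertions in turn: first that the displayed formula descends to a well-defined $\bk$-linear map on the quotient $\St(V)\otimes_{\bk}\St(U)$, and then that the resulting maps make $\St$ a monoid object. Both parts are bookkeeping with apartment classes, and the only step that needs a small idea is well-definedness against the Pl\"ucker-type relation (c) recalled above; I expect that to be the main obstacle.

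For well-definedness, giving a $\bk$-linear map out of $\St(V)\otimes_{\bk}\St(U)$ is the same as giving a bilinear rule on apartment classes that kills relations (a), (b), (c) applied in either tensor slot. The permutation relation (a) and the rescaling relation (b) are immediate: permuting $v_1,\dots,v_n$ (resp.\ $u_1,\dots,u_m$) by $\sigma$ is the restriction to the first $n$ (resp.\ last $m$) letters of $[v_1,\dots,v_n,u_1,\dots,u_m]$ of a permutation of the same sign, so the needed identity is relation (a) in $\St(V\oplus U)$, and rescaling a single entry is relation (b) in $\St(V\oplus U)$ (combined with (a) when the entry is not in the first position). For the Pl\"ucker relation (c) I must show that for nonzero vectors $v_0,\dots,v_n\in V$ and a basis $u_1,\dots,u_m$ of $U$ one has $\sum_{i=0}^n(-1)^i[v_0,\dots,\hat v_i,\dots,v_n,u_1,\dots,u_m]=0$ in $\St(V\oplus U)$. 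The idea is to apply relation (c) in $\St(V\oplus U)$ to the full ordered list of $n+1+m$ nonzero vectors $v_0,\dots,v_n,u_1,\dots,u_m$: deleting any $u_k$ leaves all of $v_0,\dots,v_n$ among the survivors, which are $n+1$ vectors inside the $n$-dimensional subspace $V$ and hence dependent, so every such term drops out of the sum; deleting $v_i$ (which occupies position $i$, contributing sign $(-1)^i$) leaves $v_0,\dots,\hat v_i,\dots,v_n,u_1,\dots,u_m$, which is a basis of $V\oplus U$ exactly when $v_0,\dots,\hat v_i,\dots,v_n$ is a basis of $V$. So relation (c) in $\St(V\oplus U)$ reads precisely as the identity we want. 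The mirror statement for the $\St(U)$ slot is proved identically, applying relation (c) to the list $v_1,\dots,v_n,u_0,\dots,u_m$ and observing that deleting any $v_j$ now strands the dependent family $u_0,\dots,u_m$; the overall factor $(-1)^n$ that then appears is harmless.

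It remains to verify the monoid axioms. The product maps are natural in $V$ and in $U$: for injections $V\hookrightarrow V'$ and $U\hookrightarrow U'$, both composites in the evident square send $[v_1,\dots,v_n]\otimes[u_1,\dots,u_m]$ to the apartment class on the images of these vectors, so the maps assemble into a single morphism $\mu\colon\St\otimes\St\to\St$ in $(\Mod_{\VB},\otimes)$. Associativity and unitality then reduce to the corresponding facts about concatenation of ordered tuples: both bracketings of the triple product send $[v_\bullet]\otimes[u_\bullet]\otimes[w_\bullet]$ to $[v_1,\dots,v_n,u_1,\dots,u_m,w_1,\dots,w_\ell]$, and the rank-$0$ apartment class $[\,]$ is a two-sided unit, so---after unwinding the monoidal unit of $(\Mod_{\VB},\otimes)$, which is routine---the unit axioms hold. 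This establishes the asserted monoid structure; note that $\St$ is not a \emph{commutative} monoid, since interchanging the two blocks of an apartment class multiplies it by $(-1)^{nm}$ via relation (a).
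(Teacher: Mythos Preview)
Your argument is correct. The key point---that applying relation (c) in $\St(V\oplus U)$ to the list $v_0,\dots,v_n,u_1,\dots,u_m$ automatically discards every term in which a $u_k$ is deleted, because the surviving $v_0,\dots,v_n$ are already dependent in $V$---is exactly what makes well-definedness go through, and you have identified and executed it cleanly. The checks of (a), (b), associativity, and unitality are routine and correct as you wrote them.

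Two minor remarks. First, the category $\VB$ has only linear \emph{isomorphisms} as morphisms, not arbitrary injections; your naturality square should be stated for isomorphisms $V\xrightarrow{\sim}V'$, $U\xrightarrow{\sim}U'$. This does not affect your argument, which works verbatim in that case. Second, note that this corrigendum does not itself prove the proposition: it is quoted here as \cite[Proposition~2.6]{MNP} purely for later use, and the original proof lives in that paper. So there is no proof in the present document to compare against; your write-up is a self-contained verification of a result the authors are only recalling.
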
 

The chain complex $\overline{\mathcal B}^n_*(\St)$---the reduced bar complex of $\St$ in degree $n$---is given by that chain groups
\[ \overline{\mathcal B}^n_s(\St) = \bigoplus_{\substack{W_1 \oplus \dots \oplus W_s = K^n\\ W_i \neq 0}} \St(W_1) \otimes \dots \otimes \St(W_s)\]
and the differential $\partial\colon \overline{\mathcal B}^n_s( \St) \to \overline{\mathcal B}^n_{s-1}( \St)$ is given as the alternating sum $\partial = \sum_{1 \le i \le s-1} (-1)^i d_i$, where $d_i$ is the direct sum of the maps
\[ \St(W_1) \otimes \dots \otimes \St(W_i) \otimes \St(W_{i+1})\otimes \dots \otimes \St(W_s) \longrightarrow \St(W_1) \otimes \dots \otimes \St(W_i\oplus W_{i+1})\otimes \dots \otimes \St(W_s),\]
using the product structure.

In order to show \autoref{prop3.6}, we will define a filtration on $\overline{\mathcal B}^n_*(\St)$ and show that successive quotients have vanishing homology in degrees $*<n$. (Note that the chain complex is zero in degrees $*>n$.)

Given a subspace $W\subset K^n$ of dimension $d$, we say that an ordered basis $(w_1, \dots, w_d)$ of $W$ is a \emph{PBW-basis of $W$} if for $w_i$ the first nonzero entry, whose position we will denote by $s_i$, is $1\in K$, and $s_1< s_2 < \dots < s_d$. 

Despite there being many PBW-bases for $W$, the indices $s_1< s_2 < \dots < s_d$ are uniquely determined for each $W$. In particular, $s_{i}$ is the largest integer $k$ between $1$ and $n$ such that the intersection of $W$ and the subspace of $K^n$ whose first $k-1$ entries are zero is at least $(d-i+1)$-dimensional. Let us write
\[ S_W = (s_1,\dots, s_d).\]

We observe in \cite[Proposition 3.3]{MNP}, that the Solomon--Tits Theorem implies that the set of apartment classes 
\[ \{ [w_1,\dots,w_d] \in \St(W) \mid (w_1,\dots,w_d) \text{ is a PBW-basis of $W$}\}\]
coming from PBW-bases gives a basis of $\St(W)$ as a free $\bk$-module. This will also determine a chosen basis 
\[\{ [a_1] \otimes \dots \otimes [a_p] \mid a_i \text{ is a PBW-basis of $W_i$ for a decomposition $W_1 \oplus \dots \oplus W_p = K^n$}\}\]
of  $\overline{\mathcal B}^n_p(\St)$. To each (ordered) decomposition $D = (W_1, \dots, W_p)$ with $W_1 \oplus \dots \oplus W_p = K^n$, we associate the sequence
\[ S_D = S_{W_1}S_{W_2} \dots S_{W_p} \in \{1, \dots, n\}^n\]
concatenating the sequences $S_{W_1}, \dots, S_{W_p}$, which is a sequence of $n$ integers between $1$ and $n$.

Given a sequence $S = (s_1, \dots, s_n)\in \{1,\dots,n\}^n$, we define
\[I(S) =  (c_1,\dots, c_n, k) \in \{1, \dots,n\}^n \times \{0,\dots, {n \choose 2}\},\]
where $c_i$ is the number of $i$'s in $S$ and $k$ is the number of pairs $(i,j)$ with $1\le i < j \le n$ and $s_i > s_j$. We write $I_D$ for $I(S_D)$. We will use the lexicographic order $\le_{\mathrm{lex}}$ on $\{1, \dots,n\}^n \times \{0,\dots, {n \choose 2}\}$ to define a filtration 
\[ F_{I}\overline{\mathcal B}^n_p(\St) = \bigoplus_{\substack{D= (W_1, \dots, W_p) \\ W_1 \oplus \dots \oplus W_s = K^n\\ W_i \neq 0\\I_D \le_{\mathrm{lex}} I}} \St(W_1) \otimes \dots \otimes \St(W_s).\]

\begin{lemma}
$F_I\overline{\mathcal B}^n_*(\St)$ defines a subcomplex of $\overline{\mathcal B}^n_*(\St)$.
\end{lemma}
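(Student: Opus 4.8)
The plan is to reduce the lemma to a single combinatorial fact: merging two consecutive blocks of an ordered decomposition of $K^n$ cannot increase the invariant $I_D$ for the lexicographic order. That fact is then proved by a dimension count.

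First I would observe that it suffices to prove the following. Let $D=(W_1,\dots,W_p)$ be an ordered decomposition of $K^n$, fix $1\le i\le p-1$, and let $D'=(W_1,\dots,W_{i-1},\,W_i\oplus W_{i+1},\,W_{i+2},\dots,W_p)$ be obtained from $D$ by merging the $i$-th and $(i+1)$-st blocks. Then $I_{D'}\le_{\mathrm{lex}} I_D$. Granting this, note that the face map $d_i$ carries the summand $\St(W_1)\otimes\dots\otimes\St(W_p)$ of $\overline{\mathcal B}^n_p(\St)$ into the single summand of $\overline{\mathcal B}^n_{p-1}(\St)$ indexed by $D'$; hence if $I_D\le_{\mathrm{lex}} I$ then the image of that summand under $d_i$ lies in $F_I\overline{\mathcal B}^n_{p-1}(\St)$, and summing over $i$ gives $\partial\bigl(F_I\overline{\mathcal B}^n_p(\St)\bigr)\subseteq F_I\overline{\mathcal B}^n_{p-1}(\St)$. (The chosen PBW bases are irrelevant here, since the filtration is defined summand by summand.)

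To prove $I_{D'}\le_{\mathrm{lex}} I_D$, I would encode $S_W$ by a dimension function. For $1\le t\le n$ let $E_t\subseteq K^n$ be the subspace of vectors whose first $t-1$ coordinates vanish, and set $f_t(W)=\dim(W\cap E_t)$. The characterisation of $S_W=(s_1<\dots<s_d)$ recalled above is equivalent to the identity $f_t(W)=\#\{\,j:s_j\ge t\,\}$, so the non-increasing function $t\mapsto f_t(W)$ and the sequence $S_W$ determine each other. The crucial point is that if $U\cap V=0$ then $f_t(U\oplus V)\ge f_t(U)+f_t(V)$ for every $t$, because $(U\cap E_t)\oplus(V\cap E_t)$ is a subspace of $(U\oplus V)\cap E_t$. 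Translated through the encoding, this says that the increasing sequence $S_{U\oplus V}$ dominates, term by term, the increasing rearrangement of the concatenation $S_U S_V$ (two multisets of the same cardinality). Since $S_D$ and $S_{D'}$ agree outside the merged block, and inside it the block of $S_{D'}$ is $S_{W_i\oplus W_{i+1}}$ while that of $S_D$ is $S_{W_i}S_{W_{i+1}}$, both of length $\dim W_i+\dim W_{i+1}$, it follows that the increasing rearrangement of $S_{D'}$ dominates term by term that of $S_D$; equivalently, writing $I(S)=(c_1,\dots,c_n,k)$, we have $c_1(S_{D'})+\dots+c_j(S_{D'})\le c_1(S_D)+\dots+c_j(S_D)$ for every $j$.

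Now I would split into two cases. If $S_{D'}\ne S_D$ as multisets, the partial-sum inequality above is strict for some $j$; taking the least such $j$ and using that the earlier partial sums coincide yields $\bigl(c_1(S_{D'}),\dots,c_n(S_{D'})\bigr)<_{\mathrm{lex}}\bigl(c_1(S_D),\dots,c_n(S_D)\bigr)$, hence $I_{D'}<_{\mathrm{lex}} I_D$. If instead $S_{D'}=S_D$ as multisets, the count vectors agree and it remains to compare the inversion numbers: here $S_{D'}$ is obtained from $S_D$ by replacing the two increasing runs $S_{W_i}S_{W_{i+1}}$ with the increasing rearrangement $S_{W_i\oplus W_{i+1}}$ of the same multiset in the same positions, so inversions not meeting that block are unchanged, inversions between the block and a fixed position outside it depend only on that position and on the multiset of block entries (hence are unchanged), and inversions inside the block drop from a nonnegative number to $0$; therefore $k(S_{D'})\le k(S_D)$ and again $I_{D'}\le_{\mathrm{lex}} I_D$. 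The only real content is the inequality $f_t(U\oplus V)\ge f_t(U)+f_t(V)$ and its translation into the domination statement for $S_{U\oplus V}$; everything after that is bookkeeping with the lexicographic order.
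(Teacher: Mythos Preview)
Your proof is correct and follows essentially the same strategy as the paper's: reduce to $I_{D'}\le_{\mathrm{lex}} I_D$ for the merged decomposition, then split according to whether $S_{W_i}$ and $S_{W_{i+1}}$ are disjoint (equivalently, whether $S_D$ and $S_{D'}$ agree as multisets), handling the inversion count in the first case and the count vector $(c_1,\dots,c_n)$ in the second. Your introduction of the dimension function $f_t(W)=\dim(W\cap E_t)$ and the inequality $f_t(U\oplus V)\ge f_t(U)+f_t(V)$ is a clean, uniform way to package the combinatorics that the paper establishes more directly by inspecting pivots, but the substance is the same.
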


\begin{proof}
Let $D = (W_1, \dots, W_p)$ with $W_1 \oplus \dots \oplus W_p = K^n$ and $D_i = (W_1 , \dots ,(W_i\oplus W_{i+1}),  \dots ,W_p )$. We will show that $I_D \ge_{\mathrm{lex}} I_{D_i}$ to prove that

Note that the difference of the sequences $S_D$ and $S_{D_i}$ is the part $S_{W_i}S_{W_{i+1}}$ of $S_D$ and $S_{W_i \oplus W_{i+1}}$ of $S_{D_i}$. We will therefore analyze how this can change.

Case 1: Let us assume that $S_{W_i}$ and $S_{W_{i+1}}$ have no elements in common. Then $S_{W_i \oplus W_{i+1}}$ is precisely the union of the elements of $S_{W_i}$ and $S_{W_{i+1}}$ put into the correct order. In particular, the $c$'s in $I_D$ and $I_{D_i}$ are identical and $k$ cannot get larger. This proves the assertion in this case.

Case 2: Let us assume that $S_{W_i}$ and $S_{W_{i+1}}$ have an element in common, and let $k$ be the smallest element in common. Then $S_{W_i \oplus W_{i+1}}$ contains all elements of $S_{W_i}$ and $S_{W_{i+1}}$ that are at most $k$ (but in the correct order). But $k$ can only appear once and so a number bigger than $k$ has to replace one of them. This means that $c_j$ for $j<k$ stays the same and $c_k$ is decreased by at least one. Thus, in the lexicographical order $I_D > I_{D_i}$. This proves the assertion in this case.

Let us consider one simple example for Case 2, to make clear what happens. Say, we have $W_1 \oplus W_2 = K^3$ given by the PBW-bases
\[ W_1 = \mathrm{span}( w_1= \begin{bmatrix} 1\\2\\3\end{bmatrix}, w_2 = \begin{bmatrix} 0\\1\\4\end{bmatrix}), \quad W_2 = \mathrm{span}(  w_3 = \begin{bmatrix} 0\\1\\6\end{bmatrix}).\]
Then $w_2-w_3 = \begin{bmatrix} 0\\0\\-2\end{bmatrix}$ has the first nonzero entry in the third place and can be made into the PBW-basis 
\[w_1= \begin{bmatrix} 1\\2\\3\end{bmatrix}, \quad w_2 = \begin{bmatrix} 0\\1\\4\end{bmatrix}, \quad -\frac12(w_2-w_3) = \begin{bmatrix} 0\\0\\1\end{bmatrix}\]
of $K^3$. In particular, $S_{W_1}S_{W_2} = (1,2,2)$ turned into $S_{K^3}= (1,2,3)$.
\end{proof}

It remains to prove that 
\[H_s(F_I\overline{\mathcal B}^n_*(\St)/ F_{I'}\overline{\mathcal B}^n_*(\St)) = 0\quad \text{for $s<n$,}\]
where $I'$ is the element in $\{1, \dots,n\}^n \times \{0,\dots, {n \choose 2}\}$ one smaller than $I$ in lexicographical order. To this end, we will define 
\[ \Phi\colon F_I\overline{\mathcal B}^n_p(\St) \longrightarrow F_I\overline{\mathcal B}^n_{p+1}(\St)\]
such that 
\[ \Phi \partial + \partial \Phi = \id\]
on 
\[ F_I\overline{\mathcal B}^n_p(\St)/ F_{I'}\overline{\mathcal B}^n_p(\St)\]
if $p<n$. Such a $\Phi$ is a homotopy between the identity map and a map $f$ from $F_I\overline{\mathcal B}^n_p(\St)/ F_{I'}\overline{\mathcal B}^n_p(\St)$ to itself, where $f$ is zero on all degrees $p<n$. So the existence of such a $\Phi$ implies the assertion of \autoref{prop3.6}.

We now define $\Phi$. For $S\in \{1,\dots,n\}^n$ for which there exists an $i$ such that $s_i<s_{i+1}$, let
\[ k_S = \min\{s \mid s = s_i < s_{i+1}\}\]
and
\[ j_S = \min\{i \mid k_S = s_i < s_{i+1}\}.\]
Let
\[ a_1 \otimes \dots \otimes a_p \in \St(W_1) \otimes \dots \otimes \St(W_p) \subset \overline{\mathcal B}^n_p(\St)\]
for a decomposition $D = (W_1, \dots, W_p)$ with $W_1 \oplus \dots\oplus W_p = K^n$ and $p<n$ be one of the chosen basis elements. In particular,
\[ a_i = [w_{\dim W_1 + \dots + \dim W_{i-1} +1}, \dots, w_{\dim W_1 + \dots + \dim W_{i-1} +\dim W_i}]\]
is the apartment coming from a PBW-basis of $W_i$. Then $k_{S_D}$ and $j_{S_D}$ exist because $p<n$, and say $i_0$ is the number such that
\[ \dim W_1 + \dots + \dim W_{i_0-1} +1 \le j_{S_D} \le \dim W_1 + \dots + \dim W_{i-1} +\dim W_{i_0}.\]
Note that then
\[  j_{S_D} = \dim W_1 + \dots + \dim W_{i_0-1} +1\]
because $s_{j_{S_D}}$ has to be minimal. If $\dim W_{i_0} =1$, we define
\[ \Phi(a_1 \otimes \dots \otimes a_p) =0.\]
If $\dim W_{i_0} >1$, we split off the first vector of $a_{i_0}$: let 
\[a'_{i_0} = [w_{\dim W_1 + \dots + \dim W_{i_0-1} +1}] \in \St( \mathrm{span}( w_{\dim W_1 + \dots + \dim W_{i_0-1} +1}))\]
and
\begin{multline*}a''_{i_0} = [w_{\dim W_1 + \dots + \dim W_{i_0-1} +2},\dots, w_{\dim W_1 + \dots + \dim W_{i-1} +\dim W_{i_0}}] \\\in \St( \mathrm{span}( w_{\dim W_1 + \dots + \dim W_{i_0-1} +2},\dots, w_{\dim W_1 + \dots + \dim W_{i-1} +\dim W_{i_0}})).\end{multline*}
We then define
\[ \Phi(a_1 \otimes \dots \otimes a_p) = (-1)^{i_0} \cdot a_1 \otimes \dots \otimes a_{i_0-1} \otimes a'_{i_0} \otimes a''_{i_0} \otimes a_{i_0+1} \otimes \dots \otimes a_p \in \overline{\mathcal B}^n_{p+1}(\St).\]
Observe that $S$ of the corresponding decompositions does not change, so this restricts to a map
\[ \Phi\colon F_I\overline{\mathcal B}^n_p(\St) \longrightarrow F_I\overline{\mathcal B}^n_{p+1}(\St).\]

It remains to prove that 
\[ \Phi \partial + \partial \Phi = \id\]
on 
\[ F_I\overline{\mathcal B}^n_p(\St)/ F_{I'}\overline{\mathcal B}^n_p(\St)\]
if $p<n$.

We will calculate $\Phi \partial + \partial \Phi$ in two cases as we also defined $\Phi$ in two cases.

First, assume that 
\[ a = a_1 \otimes \dots \otimes a_p \in \St(W_1) \otimes \dots \otimes \St(W_p) \subset F_I\overline{\mathcal B}^n_p(\St)\]
is as above with the assumption that $\dim W_{i_0} = 1$. Then $\Phi(a) = 0$ and so of course $\partial \Phi(a) =0$. On the other hand
\[ \partial(a) = \sum_{1 \le i \le p-1} (-1)^i a_1 \otimes \dots \otimes a_ia_{i+1} \otimes \dots \otimes a_p.\]
Note that in $F_I\overline{\mathcal B}^n_p(\St)/ F_{I'}\overline{\mathcal B}^n_p(\St)$, we only need to consider those summands whose $S_{D_i}$ does not change and thus $a_ia_{i+1}$ is still an apartment coming from a PBW-basis---otherwise $I_{D_i} <_{\mathrm{lex}} I_D$.
Now  
\[ \Phi(a_1 \otimes \dots \otimes a_ia_{i+1} \otimes \dots \otimes a_p) = 0\]
unless $i=i_0$, because otherwise the vector $w_{j_{S_D}}$ is still in the Steinberg module of a 1-dimensional subspace. And
\[ \Phi(a_1 \otimes \dots \otimes a_{i_0}a_{i_0+1} \otimes \dots \otimes a_p) = (-1)^{i_0}a_1 \otimes \dots \otimes a_p. \]
Using all of this information, we get that 
\[ (\partial \Phi + \Phi \partial)(a) = 0 + (-1)^{i_0}(-1)^{i_0} a = a\]
in the first case.

Second, we assume that $\dim W_{i_0} >1$. Then
\[ \Phi(a) = (-1)^{i_0} \cdot a_1 \otimes \dots \otimes a_{i_0-1} \otimes a'_{i_0} \otimes a''_{i_0} \otimes a_{i_0+1} \otimes \dots \otimes a_p.\]
It follows that
\begin{align*} &\partial (a_1 \otimes \dots \otimes a_{i_0-1} \otimes a'_{i_0} \otimes a''_{i_0} \otimes a_{i_0+1} \otimes \dots \otimes a_p)\\
 =& \phantom{+} \sum_{1 \le i \le i_0-2} (-1)^i a_1 \otimes \dots \otimes a_ia_{i+1} \otimes \dots \otimes a'_{i_0} \otimes a''_{i_0} \otimes\dots \otimes a_p\\
 & + (-1)^{i_0-1} a_1 \otimes \dots\otimes a_{i_0-1}a'_{i_0} \otimes a''_{i_0} \otimes\dots \otimes a_p\\
& + (-1)^{i_0} a_1 \otimes \dots \otimes a_p\\
 & + (-1)^{i_0+1} a_1 \otimes \dots\otimes a'_{i_0} \otimes a''_{i_0}a_{i_0+1} \otimes\dots \otimes a_p\\
 &+ \sum_{i_0+1 \le i \le p} (-1)^{i+1} a_1  \otimes \dots \otimes a'_{i_0} \otimes a''_{i_0} \otimes \dots \otimes a_ia_{i+1}\otimes\dots \otimes a_p.
 \end{align*}
 Note that the summand 
 \[ a_1 \otimes \dots\otimes a_{i_0-1}a'_{i_0} \otimes a''_{i_0} \otimes\dots \otimes a_p\]
 is zero in $F_I\overline{\mathcal B}^n_p(\St)/ F_{I'}\overline{\mathcal B}^n_p(\St)$ because $s$ of the last vector in $a_{i_0-1}$ has to be at least as big as the $s$ of the vector in $a'_{i_0}$ by the minimality condition.

On the other hand, 
\[ \partial(a) = \sum_{1 \le i \le p-1} (-1)^i a_1 \otimes \dots \otimes a_ia_{i+1} \otimes \dots \otimes a_p.\]
Note that here the summand
\[ a_1 \otimes \dots \otimes a_{i_0-1}a_{i_0} \otimes \dots \otimes a_p\]
is zero in $F_I\overline{\mathcal B}^n_p(\St)/ F_{I'}\overline{\mathcal B}^n_p(\St)$ as well.
Before applying $\Phi$, we will again observe that all summands 
\[ a_1 \otimes \dots \otimes a_ia_{i+1} \otimes \dots \otimes a_p\]
that are nonzero in $F_I\overline{\mathcal B}^n_p(\St)/ F_{I'}\overline{\mathcal B}^n_p(\St)$ have the same $S$ as $a$.
For those,
\[ \Phi(a_1 \otimes \dots \otimes a_ia_{i+1} \otimes \dots \otimes a_p) = (-1)^{i_0-1}\cdot a_1 \otimes \dots \otimes a_ia_{i+1} \otimes\dots \otimes a'_{i_0} \otimes a''_{i_0} \otimes \dots \otimes a_p\]
for $i< i_0 -1$,
\[ \Phi(a_1 \otimes \dots \otimes a_ia_{i+1} \otimes \dots \otimes a_p) = (-1)^{i_0}\cdot a_1 \otimes \dots \otimes a'_{i_0} \otimes a''_{i_0}a_{i_0+1} \otimes \dots \otimes a_p\]
for $i=i_0$, and
\[ \Phi(a_1 \otimes \dots \otimes a_ia_{i+1} \otimes \dots \otimes a_p) =(-1)^{i_0}\cdot a_1 \otimes \dots \otimes a'_{i_0} \otimes a''_{i_0} \otimes \dots\otimes a_ia_{i+1} \otimes\dots  \otimes a_p\]
for $i> i_0$.

We observe that the summands of $\partial\Phi(a)$ and $\Phi\partial(a)$ are cancelling exactly besides the summand
\[  (-1)^{i_0}\cdot (-1)^{i_0}\cdot a_1 \otimes \dots \otimes a_p.\]

This finishes the proof of \autoref{prop3.6}.

\begin{remark}
Another way to prove that 
\[H_s(F_I\overline{\mathcal B}^n_*(\St)/ F_{I'}\overline{\mathcal B}^n_*(\St)) = 0\quad \text{for $s<n$}\]
is by identifying the relative chains with a direct sum of $(n-p+1)$-shifted reduced simplicial chains of $p$-simplices with $p\ge -1$.  But we don't spell that out here, to stay close to the original structure of the proof.
\end{remark}

\bibliographystyle{amsalpha}
\bibliography{CodimOne}

\end{document}